\title{The complete classification of irreducible components of varieties of  Jordan superalgebras}
\author{Renato Fehlberg J\'{u}nior, Ivan Kaygorodov and Azamat Saydaliyev}
\abstract{%
   The aim of the present short note is to answer the open questions posted by Hernández,  Martin, and  Rodrigues  in \cite{p1,p2}. The obtained results give the complete classification of irreducible components in the varieties of Jordan superalgebras of types $(3,1)$ and $(2,2).$
   }
\keywords{Jordan superalgebra, irreducible component, geometric classification.}
\begin{document}
 

\section{Introduction}

The algebraic classification of small-dimensional algebras has long been a central topic in the theory of non-associative algebras, and recent years have witnessed a growing interest in their geometric classification as well. Numerous results concerning the algebraic and geometric classification of various non-associative algebras can be found in \cite{k23} and \cite{MS}, including references therein. To name a few more recent developments, the geometric classification of symmetric Leibniz algebras was obtained in \cite{FKS}, while the geometric classification of nilpotent Lie–Yamaguti, Bol, and compatible Lie algebras was given by Abdurasulov, Khudoyberdiyev, and Toshtemirova in \cite{kz}.

In this paper, we focus on four-dimensional Jordan superalgebras, whose algebraic and geometric classifications were studied in \cite{aks, p1, p2}. While the authors of \cite{aks} focused exclusively on identifying irreducible components (i.e., on "rigid" superalgebras and "rigid" families of superalgebras), the authors of \cite{p1, p2} attempted to present the full description of irreducible components (i.e., the description of superalgebras that are in the orbit closure of "rigid" superalgebras and "rigid" families of superalgebras) and left several open problems in the varieties of Jordan superalgebras of types $(2,2)$ and $(3,1)$. Our main goal is to solve these problems, thereby completing the  description of irreducible components presented in the abovementioned works. After presenting the necessary preliminaries in the following section, we show in the last section that there is no degeneration between any of the pairs of superalgebras left as open problems in \cite{p1,p2}.

\section{Preliminaries}

All superalgebras in this work are considered over the field of complex numbers $\mathbb{C}$. A commutative algebra is called a {\it  Jordan  algebra}  if it satisfies the identity $(x^2y)x=x^2(yx).$  A \textit{superalgebra} $\mathcal{A}$ is an algebra with a $\mathbb{Z}_2$-grading, i.e. $\mathcal{A}=\mathcal{A}_0\oplus\mathcal{A}_1$ is a direct sum of two vector spaces and
        $$\mathcal{A}_i \mathcal{A}_j \subseteq \mathcal{A}_{i+j}, \ \ \text{where} \ \ i,j\in \mathbb{Z}_2.$$
A \textit{Jordan superalgebra} is a superalgebra $\mathfrak{J}=\mathfrak{J}_0 + \mathfrak{J}_1$ satisfying the graded identities:
  \begin{center}
        ${xy=(-1)^{|x||y|}yx,}$
  \end{center}
\begin{flushleft} $ {\big((xy)z\big)t+(-1)^{|y||z|+|y||t|+|z||t|}\big((xt)z\big)y+(-1)^{|x||y|+|x||z|+|x||t|+|z||t|}\big((yt)z\big)x\ =}$ \end{flushleft}
\begin{flushright}
    ${(xy)(zt)+(-1)^{|t|(|y|+|z|)}(xt)(yz)+(-1)^{|y||z|}(xz)(yt),}$
\end{flushright}
where $|x|=i$ for $x \in \mathfrak{J}_i$.
As the full list of four-dimensional Jordan superalgebras is extensive, we do not reproduce it here and instead refer the reader to \cite{p1, p2}.

Let \( V = V_0 \oplus V_1 \) be a \( \mathbb{Z}_2 \)-graded vector space with a fixed homogeneous basis  
\(\{e_1, \ldots, e_m, f_1, \ldots, f_n\}\). A Jordan superalgebra structure on \(V\) can be described via structure constants 
\((\alpha_{ij}^k, \beta_{ij}^k, \gamma_{ij}^k) \in \mathbb{C}^{m^3 + 2mn^2}\), where the multiplication is defined as:
\[
e_i e_j = \sum_{k=1}^{m} \alpha_{ij}^k e_k, \quad 
e_i f_j = \sum_{k=1}^{n} \beta_{ij}^k f_k, \quad 
f_i f_j = \sum_{k=1}^{m} \gamma_{ij}^k e_k.
\]
These constants must satisfy the supercommutativity and the Jordan superidentity. Thus, the set of all Jordan superalgebras of type \((m,n)\) forms an affine variety in \( \mathbb{C}^{m^3 + 2mn^2} \), denoted by \( \mathcal{JS}^{(m,n)} \). A point \((\alpha_{ij}^k, \beta_{ij}^k, \gamma_{ij}^k) \in \mathcal{JS}^{(m,n)}\) represents a Jordan superalgebra \( \mathcal{J} \) of type \((m,n)\) with respect to the chosen basis.

More generally, let $\mathcal{S}^{m,n}$ denote the set of all superalgebras of dimension $(m,n)$ defined by a family of polynomial superidentities $T$, regarded as a subset $\mathbb{L}(T)$ of the affine variety $\operatorname{Hom}(V \otimes V, V)$. Then $\mathcal{S}^{m,n}$ is a Zariski-closed subset of the variety $\operatorname{Hom}(V \otimes V, V)$. 

The group $G = (\operatorname{Aut} V)_0 \simeq \operatorname{GL}(V_0) \oplus \operatorname{GL}(V_1)$ acts on $\mathcal{S}^{m,n}$ by conjugation:
\[
(g * \mu)(x \otimes y) = g \mu(g^{-1} x \otimes g^{-1} y),
\]
for all $x, y \in V$, $\mu \in \mathbb{L}(T)$, and $g \in G$.

Let $\mathcal{O}(\mu)$ denote the orbit of $\mu \in \mathbb{L}(T)$ under the action of $G$, and let $\overline{\mathcal{O}(\mu)}$ be the Zariski closure of $\mathcal{O}(\mu)$. Suppose $J, J' \in \mathcal{S}^{m,n}$ are represented by $\lambda, \mu \in \mathbb{L}(T)$, respectively. We say that $\lambda$ degenerates to $\mu$, denoted $\lambda \to \mu$, if $\mu \in \overline{\mathcal{O}(\lambda)}$. In this case, we have $\overline{\mathcal{O}(\mu)} \subset \overline{\mathcal{O}(\lambda)}$. Therefore, the notion of degeneration does not depend on the particular representatives, and we write $J \to J'$ instead of $\lambda \to \mu$, and $\mathcal{O}(J)$ instead of $\mathcal{O}(\lambda)$. 
We write $J \not\to J'$ to indicate that $J' \notin \overline{\mathcal{O}(J)}$.

If $J$ is represented by $\lambda \in \mathbb{L}(T)$, we say that $J$ is \textit{rigid} in $\mathbb{L}(T)$ if $\mathcal{O}(\lambda)$ is an open subset of $\mathbb{L}(T)$. A subset of a variety is called \textit{irreducible} if it cannot be written as a union of two proper closed subsets. A maximal irreducible closed subset is called an \textit{irreducible component}. In particular, $J$ is rigid in $\mathcal{S}^{m,n}$ if and only if $\overline{\mathcal{O}(\lambda)}$ is an irreducible component of $\mathbb{L}(T)$. It is a well-known fact that every affine variety admits a unique decomposition into finitely many irreducible components.


To prove a non-degeneration $J \not\to J',$ we use the standard argument from Lemma, whose proof is the same as the proof of   \cite[Lemma 1.5]{GRH}.

\begin{lemma}
Let $\mathfrak{B}$ be a Borel subgroup of ${\rm GL}(\mathbb V)$ and ${\rm R}\subset \mathbb{L}(T)$ be a $\mathfrak{B}$-stable closed subset.
If $J  \to J'$ and  the superalgebra $J $ can be represented by a structure $\mu\in{\rm R}$, then there is $\lambda\in {\rm R}$ representing $J'$.
\end{lemma}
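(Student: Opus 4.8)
The plan is to reduce everything to one structural fact: for a reductive group $G$ with Borel subgroup $\mathfrak{B}$, the saturation $G\cdot Z$ of a $\mathfrak{B}$-stable closed set $Z$ is again closed. Fix a representative $\mu\in {\rm R}$ of $J$ and a representative $\mu'$ of $J'$; by definition of $J\to J'$ we have $\mu'\in\overline{\mathcal{O}(\mu)}=\overline{G\mu}$, and what must be produced is some $\lambda\in {\rm R}$ lying in the orbit $\mathcal{O}(\mu')$. First I would set $Z:=\overline{\mathfrak{B}\mu}$. Because ${\rm R}$ is closed, $\mathfrak{B}$-stable, and contains $\mu$, it contains the orbit $\mathfrak{B}\mu$ and hence its closure, so $Z\subseteq {\rm R}$; moreover $Z$ is itself closed and $\mathfrak{B}$-stable, the latter because $\mathfrak{B}$ acts by homeomorphisms.

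The key step, and the one I expect to be the main obstacle, is to show that $G\cdot Z$ is Zariski-closed in $\mathbb{L}(T)$, since this is the only place where reductivity of $G$ is genuinely used. The plan is to exploit that the Borel $\mathfrak{B}$ makes $G/\mathfrak{B}$ a complete variety. Concretely, I would form the incidence set
\[
W=\{(g\mathfrak{B},\nu)\in (G/\mathfrak{B})\times\mathbb{L}(T): g^{-1}\!\cdot\nu\in Z\}.
\]
This is well-defined exactly because $Z$ is $\mathfrak{B}$-stable, as replacing $g$ by $gb$ with $b\in\mathfrak{B}$ does not change whether $g^{-1}\cdot\nu$ lies in $Z$; and it is closed, being the descent to the quotient of the closed, $\mathfrak{B}$-saturated subset $\{(g,\nu)\in G\times\mathbb{L}(T): g^{-1}\!\cdot\nu\in Z\}$. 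Since $G/\mathfrak{B}$ is complete, the projection $(G/\mathfrak{B})\times\mathbb{L}(T)\to\mathbb{L}(T)$ is a closed morphism, and the image of $W$ under it is precisely $G\cdot Z$; hence $G\cdot Z$ is closed. This is the technical core borrowed from \cite[Lemma 1.5]{GRH}, and it is exactly the completeness (universal closedness) of $G/\mathfrak{B}$ that makes it work.

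The conclusion is then immediate. Since $\mu\in Z$ we have $G\mu\subseteq G\cdot Z$, and as $G\cdot Z$ is closed it contains $\overline{G\mu}$; in particular $\mu'\in G\cdot Z$. Thus $\mu'=g\cdot z$ for some $g\in G$ and $z\in Z\subseteq {\rm R}$, and setting $\lambda:=z=g^{-1}\!\cdot\mu'$ yields a point of ${\rm R}$ in the same $G$-orbit as $\mu'$, i.e.\ a representative of $J'$ lying in ${\rm R}$, as required. The one point deserving care is the identification of $G$ and its Borel in the super setting: here the acting group is the even part $\operatorname{GL}(V_0)\oplus\operatorname{GL}(V_1)$, which is reductive with Borel the product of the two block Borels, so $G/\mathfrak{B}$ is a product of flag varieties and remains complete, and the argument carries over verbatim.
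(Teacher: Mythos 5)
Your proof is correct and is essentially the argument the paper invokes: the paper does not spell out a proof but defers to \cite[Lemma 1.5]{GRH}, whose proof is exactly your completeness argument, namely that the $G$-saturation of a $\mathfrak{B}$-stable closed set is closed because it is the image of a closed incidence set under the proper projection $(G/\mathfrak{B})\times\mathbb{L}(T)\to\mathbb{L}(T)$. Your closing remark on the super setting (that $G=\operatorname{GL}(V_0)\times\operatorname{GL}(V_1)$ is reductive with $G/\mathfrak{B}$ a product of flag varieties, hence still complete) is precisely the point that makes the cited proof carry over verbatim, as the paper asserts.
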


\section{Results}

Let $(3,1)_i$ and $(2,2)_i$ denote the four-dimensional Jordan superalgebras as  in \cite{p1, p2}.

\begin{theorem}
    There is no degeneration $(3,1)_i \rightarrow (3,1)_j$  for the following index pairs $(i,j)$.

\begin{longtable}{lllllllllll}
    $(6,2);$ \  $(46,28);$ \ $(31,34);$ \ $(21,38);$ \    $(21,42);$ \ $(27,42);$ \ $(53,43);$ \ $(55,47);$\\ 
    $(i,40)$  for   $ i \in \{ 6,11, \dots,16, 18,46, 49 , \dots, 55 \}.$ \\
\end{longtable}

\end{theorem}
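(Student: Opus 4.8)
The plan is to establish each non-degeneration $(3,1)_i \not\to (3,1)_j$ by producing, for the given pair, a Borel-stable closed subset to which Lemma~\ref{gmain} applies. Concretely, I would fix the ordered homogeneous basis $e_1,e_2,e_3,f_1$ of $V=V_0\oplus V_1$ and take $\mathfrak{B}=\mathfrak{B}_0\times \mathrm{GL}(V_1)$, where $\mathfrak{B}_0$ is the subgroup of upper-triangular elements of $\mathrm{GL}(V_0)\cong\mathrm{GL}_3$; since $\dim V_1=1$, the factor $\mathrm{GL}(V_1)\cong\mathbb{C}^\ast$ is already a torus and needs no truncation. The contrapositive of Lemma~\ref{gmain} is the working criterion: if I can exhibit a $\mathfrak{B}$-stable closed set $\mathrm{R}\subset\mathbb{L}(T)$ containing a representative of the source $(3,1)_i$ but containing no representative of the target $(3,1)_j$, then $(3,1)_i\not\to(3,1)_j$.

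The construction of $\mathrm{R}$ rests on the fact that upper-triangular transformations preserve the standard flag $\langle e_1\rangle\subset\langle e_1,e_2\rangle\subset\langle e_1,e_2,e_3\rangle$. Hence any condition of the form $\mu(U\otimes V)\subseteq W$ with $U,W$ flag subspaces compatible with the even/odd splitting cuts out a $\mathfrak{B}$-stable closed subvariety, and so does any intersection or polynomial refinement of such conditions written in the structure constants $\alpha_{ij}^k,\beta_{ij}^k,\gamma_{ij}^k$. For each pair I would first bring $(3,1)_i$ into a basis realizing a chosen vanishing pattern of structure constants, which exhibits the required representative $\mu\in\mathrm{R}$; the defining equations of $\mathrm{R}$ are then read off from that pattern. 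The $\mathfrak{B}$-stability is verified directly by acting with a generic upper-triangular $g$ and checking that the prescribed components stay zero, or stay inside the prescribed polynomial locus.

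The core of the argument is the non-membership step: I must show that \emph{no} point of the $G$-orbit of $(3,1)_j$ lies in $\mathrm{R}$. For this I would take a generic $h\in G$, write out the transformed structure constants of $(3,1)_j$, substitute them into the defining equations of $\mathrm{R}$, and derive a contradiction --- typically that the system forces a degeneracy absent in $(3,1)_j$, such as a forbidden rank drop in the multiplication or the vanishing, for every $h$, of a product that is nonzero in $(3,1)_j$. Because the index list is finite, it then suffices to carry this analysis out pair by pair and to record the data $(\mathrm{R},\mu)$ in a table.

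I expect the main obstacle to be exactly this last step, which is also why these pairs were left open in \cite{p1,p2}: they presumably survive the coarser tests, such as comparison of orbit dimensions and of the standard semicontinuous invariants, so a tailored $\mathrm{R}$ must be found that is at once $\mathfrak{B}$-stable and fine enough to separate two superalgebras whose elementary invariants agree. The delicate feature is that one must quantify over the \emph{entire} orbit of the target rather than a single representative, and this is precisely the leverage that Lemma~\ref{gmain} provides; selecting the correct flag-adapted vanishing pattern for each of the listed pairs is where the genuine case analysis resides.
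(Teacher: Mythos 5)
Your framework is exactly the paper's: fix the homogeneous basis $e_1,e_2,e_3,f$, take the Borel subgroup (upper triangular in $\mathrm{GL}(V_0)$ times the torus $\mathrm{GL}(V_1)\cong\mathbb{C}^\ast$), and for each pair exhibit a Borel-stable closed set $\mathrm{R}$ containing a representative of the source but disjoint from the orbit of the target; the Lemma then yields the non-degeneration. The problem is that your proposal stops exactly where the proof begins: you never write down a single set $\mathrm{R}$ for any of the listed pairs, never verify that the source superalgebra satisfies it, and never carry out the non-membership argument for the target. The paper's entire proof consists of precisely this data --- a table of explicit polynomial conditions on the structure constants $c_{ij}^k$ for every pair; for instance, for $(3,1)_6\not\to(3,1)_2,(3,1)_{40}$ the set $\mathrm{R}$ is cut out by $c_{12}^3=c_{23}^1=c_{33}^2=c_{33}^1=0$, $c_{12}^1=c_{22}^2$, $2c_{23}^2=c_{33}^3$, $2c_{14}^4=c_{11}^1+c_{13}^3$, together with quadratic relations such as $c_{11}^3c_{22}^1=c_{22}^3\bigl(c_{14}^4-c_{12}^2-c_{13}^3\bigr)$ and $3c_{11}^3c_{13}^2=c_{11}^2\bigl(2c_{13}^3-c_{11}^1-c_{12}^2\bigr)$. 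Since you explicitly defer this step (``selecting the correct flag-adapted vanishing pattern \dots is where the genuine case analysis resides''), what you have is a correct statement of strategy, not a proof.

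A second, more mathematical point: your recipe for building $\mathrm{R}$ --- conditions of the form $\mu(U\otimes V)\subseteq W$ for flag subspaces, plus ``polynomial refinements'' --- understates what is actually required. For most of the listed pairs (which is why they were left open in \cite{p1,p2}) no purely flag-type vanishing pattern separates source from target; the paper's sets involve genuinely non-linear relations among structure constants, as above, whose stability under upper-triangular base changes must itself be checked relation by relation, and finding relations that are simultaneously Borel-stable, satisfied by the source in some basis, and violated by every representative of the target is the whole difficulty. Nothing in your proposal indicates how such relations would be produced for a given pair, so the missing content is not a routine ``fill in the details'' but the core of the result.
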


\begin{proof}

The following conditions prove the listed non-degenerations.
Namely, it is easy to see that $J$ satisfies ${\rm R},$ 
but $J'$ does not satisfy ${\rm R},$ that gives  
$J \not \rightarrow J'$  due to Lemma.

\begin{longtable}{lll|llllllll}

 \hline

$J$ &$ \not \rightarrow$&$J'$ &
$\begin{array}{l}
{\rm R}
\end{array}$\\

 \hline
$6$ &$ \not \rightarrow$&$  
2,\ 40 $ &
$\begin{array}{l}
c_{12}^3=c_{23}^1=c_{33}^2=c_{33}^1=0, \ c_{12}^1=c_{22}^2, \ 2c_{23}^2=c_{33}^3, \\ 
2c_{14}^4 =c_{11}^1 +c_{13}^3,\ c_{11}^3c_{22}^1=c_{22}^3 \big(c_{14}^4-c_{12}^2-c_{13}^3\big), \\  c_{11}^2c_{22}^3=-c_{11}^3c_{12}^1, \ 
 3c_{11}^3c_{13}^2=c_{11}^2\big(2c_{13}^3-c_{11}^1-c_{12}^2\big)
\end{array}
$\\ 
\hline



$11,12$ &$ \not \rightarrow$&$  
40\ $ &
$\begin{array}{l}
c_{11}^2=c_{22}^1=c_{23}^1=c_{23}^2=c_{31}^2=c_{33}^1=0, \ 
c_{11}^1=2c_{21}^2 \   
\end{array}
$\\ 
\hline

$13,14,49,50$ &$ \not \rightarrow$&$  
40\ $ &
$\begin{array}{l}
c_{11}^2=c_{13}^2=c_{22}^1=c_{32}^1=c_{32}^3=c_{33}^1=0 \ 
\end{array}
$\\ 
\hline

$15$ &$ \not \rightarrow$&$  
40\ $ &
$\begin{array}{l}
c_{13}^1=c_{22}^1=c_{22}^3=c_{23}^1=c_{31}^2=0, \ c_{13}^3c_{11}^3=c_{11}^1c_{11}^3+c_{11}^2c_{12}^3
\end{array}
$\\ 
\hline

$16$ &$ \not \rightarrow$&$  
40\ $ &
$\begin{array}{l}
c_{13}^1=c_{13}^2=c_{22}^1=c_{22}^3=c_{23}^1=0, \ c_{12}^2c_{11}^3=c_{11}^1c_{11}^3+2c_{11}^2c_{12}^3
\end{array}
$\\
\hline

$18$ &$ \not \rightarrow$&$  
40\ $ &
$\begin{array}{l}
c_{13}^2=c_{22}^1=c_{23}^1=c_{32}^2=c_{33}^1=0, \ c_{12}^2c_{11}^3=2c_{11}^2c_{12}^3
\end{array}
$\\ 
\hline


$21$ &$ \not \rightarrow$&$  
38, \ 42$ &
$\begin{array}{l}
c_{12}^1=c_{22}^1=c_{33}^1=c_{33}^2=0, \ c_{13}^3=2c_{14}^4, \\ 
c_{23}^3=2c_{24}^4, \ c_{33}^3=2c_{34}^4
\end{array}
$\\ 
\hline

$27$ &$ \not \rightarrow$&$  
42\ $ &
$\begin{array}{l}
c_{12}^1=c_{22}^1=c_{33}^3=0, \ c_{21}^2=2c_{14}^4, \ c_{22}^2=2c_{24}^4
\end{array}
$\\ 
\hline

$31$ &$ \not \rightarrow$&$  
34\ $ &
$\begin{array}{l}
c_{12}^1=c_{13}^1=c_{13}^2=c_{23}^2=c_{23}^3=0, \ c_{11}^1=c_{14}^4
\end{array}
$\\ 
\hline

$46$ &$ \not \rightarrow$&$  
28, \ 40\ $ &
$\begin{array}{l}
c_{13}^1=c_{23}^1=c_{33}^1=c_{33}^2=0, \ 
c_{11}^2c_{22}^3=-c_{11}^3c_{12}^1, \\ 
3c_{11}^3c_{22}^1=c_{22}^3 \big(c_{11}^1-2c_{12}^2-2c_{13}^3\big), \
2c_{12}^3c_{22}^1=c_{22}^3 \big(c_{12}^1-c_{22}^2\big), \\ \big(c_{11}^2\big)^2c_{33}^3=2c_{13}^2\big(c_{11}^2c_{12}^2+c_{11}^3c_{13}^2\big),\ 2c_{32}^3=c_{12}^1+c_{22}^2,  \\  2\big(c_{11}^2\big)^2c_{12}^3=c_{11}^3\big(c_{11}^2c_{12}^2-c_{11}^1c_{11}^2-c_{11}^3c_{13}^2\big)
\end{array}
$\\ 
\hline



$51$ &$ \not \rightarrow$&$  
40\ $ &
$\begin{array}{l}
c_{13}^2=c_{22}^1=c_{22}^3=c_{23}^1=0, \ c_{13}^3c_{11}^3=c_{11}^3c_{12}^2-c_{11}^2c_{12}^3
\end{array}
$\\ 
\hline

$52,\ 54,\ 55$ &$ \not \rightarrow$&$  
40\ $ &
$\begin{array}{l}
c_{13}^1=c_{13}^2=c_{22}^1=c_{22}^3=c_{23}^1=0, \ c_{12}^2c_{11}^3=2c_{11}^2c_{12}^3
\end{array}
$\\ 
\hline

$53$ &$ \not \rightarrow$&$  
40,\ 43 $ &
$\begin{array}{l}
c_{13}^1=c_{13}^2=c_{22}^1=c_{22}^3=c_{23}^1=c_{33}^1=c_{33}^3=0, \\ 
c_{13}^3c_{11}^3=c_{11}^2c_{12}^3, 
\ c_{11}^1=c_{14}^4, \ c_{22}^2=2c_{23}^3
\end{array}
$\\
\hline



$55$ &$ \not \rightarrow$&$  
47\ $ &
$\begin{array}{l}
c_{12}^1=c_{13}^1=c_{13}^2=c_{22}^1=c_{32}^1=0, \ c_{12}^2=c_{14}^4
\end{array}
$\\ 
\hline

\end{longtable}
\noindent
$c_{ij}^{k}$ coefficients are structural constants in the basis $x_1=e_1,\  x_2=e_2,\  x_3=e_3,\   x_4=f$.
\end{proof}

\begin{theorem}

    There is no degeneration $(2,2)_i \rightarrow (2,2)_j$  for the following index pairs $(i,j)$.

\begin{longtable}{lllllllllll}
    $(28,24);$ \ $(3,25);$ \  $(28,26);$ \  $(11,33);$ \  $(13,33);$ \
    \ $(7,41);$ \  $(45,41);$ \ $(45,43);$ \\
    $(i,32)$  for   $ i \in \{ 10, 12, 14, 45 \};$ \\
    $(i,47)$  for   $ i \in \{ 3,5, 7, 20, 28, 38, 45 \};$ \\
    $(i,51)$  for   $ i \in \{ 8, 11, 13 \};$ \\
    $(D_{\gamma}, j)$ for $j\in \{2,4,17,18,30,31,32, 35,36\}$.
\end{longtable}
    
\end{theorem}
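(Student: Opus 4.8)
The plan is to reuse, verbatim in method, the strategy of the previous theorem: for every index pair $(i,j)$ in the list I would exhibit a subset $\mathrm R\subset\mathbb L(T)$ cut out by polynomial equations in the structure constants $c_{ij}^k$, written in a fixed homogeneous basis $x_1=e_1,\,x_2=e_2,\,x_3=f_1,\,x_4=f_2$ of the type-$(2,2)$ space, such that the source superalgebra has a representative lying in $\mathrm R$ while the target has none. Each such $\mathrm R$, being a common zero locus of polynomials, is automatically Zariski-closed; hence once I check that $\mathrm R$ is stable under the Borel subgroup $\mathfrak B$ of $G=\mathrm{GL}(V_0)\oplus\mathrm{GL}(V_1)$ — that is, under independent upper-triangular changes of basis on the even part $\langle e_1,e_2\rangle$ and on the odd part $\langle f_1,f_2\rangle$ — the Lemma immediately yields $(2,2)_i\not\to(2,2)_j$ by contraposition.

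First I would record the explicit multiplication tables of all sources and targets from \cite{p1,p2} and compute how a generic element of $\mathfrak B$ transforms each relevant $c_{ij}^k$. With these transformation rules in hand, the search for each $\mathrm R$ reduces to producing a ``semi-invariant'' system: a set of vanishing conditions $c_{ij}^k=0$ that is closed off under the triangular action, together with homogeneous relations among the surviving constants whose shape is respected by the diagonal rescalings and the shears. I would then verify directly that the canonical form of $(2,2)_i$ satisfies the chosen system, and that no point of the full $G$-orbit of $(2,2)_j$ does — the second check being the substantive one, since ``the target does not satisfy $\mathrm R$'' must mean that no representative, in any admissible basis, can be driven into $\mathrm R$. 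Organizing the outcome in a table exactly as in the proof of the $(3,1)$ theorem reduces the whole argument to a finite, entry-by-entry verification.

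The main obstacle is the last family of pairs, $(D_\gamma,j)$ for $j\in\{2,4,17,18,30,31,32,35,36\}$, where the source is a one-parameter family rather than a single point. Each separating system $\mathrm R$ must contain a representative of $D_\gamma$ for \emph{every} admissible value of $\gamma$ while still excluding all nine targets, which forces the vanishing pattern and the homogeneous relations defining $\mathrm R$ to be insensitive to $\gamma$. I would therefore isolate those structure constants of $D_\gamma$ that remain zero, or remain tied by a $\gamma$-independent polynomial identity, uniformly in $\gamma$, and build $\mathrm R$ out of these alone, so that the family lands in $\mathrm R$ as a whole; the nine exclusions are then routine but must be treated case by case. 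More broadly, I expect the recurring targets — $47$, which appears for seven distinct sources, and $32$, which appears both for several sources and for $D_\gamma$ — to demand the most carefully tuned relations, because a condition too weak to rule them out would also be met by some degeneration of the source and so would fail to separate the orbits.
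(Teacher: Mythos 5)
Your plan is methodologically identical to the paper's proof: for each pair $(i,j)$ exhibit a Zariski-closed, Borel-stable set $\mathrm R$ of structure constants containing a representative of $(2,2)_i$ but meeting no representative of $(2,2)_j$, and invoke the Lemma. You also correctly identify the two genuinely delicate points: that ``the target does not satisfy $\mathrm R$'' must be checked against the \emph{entire} orbit of the target, not just its canonical multiplication table; and that for the family $D_\gamma$ the separating conditions must be uniform in $\gamma$ (the paper indeed uses a single $\gamma$-independent system, $c_{12}^1=0$, $c_{22}^2=2c_{23}^3$, $c_{12}^2=2c_{13}^3-c_{11}^1$, for all nine targets at once, exactly as you anticipated).

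However, the proposal stops at the point where the actual proof begins. The entire mathematical content of the paper's argument is the explicit table of separating systems --- e.g.\ $c_{22}^1=c_{12}^1=c_{34}^1=0$ for $(7,41)$, $(7,47)$, $(45,41)$, $(45,47)$, or $c_{12}^1=0,\ c_{11}^1=c_{13}^3$ for $(i,32)$ with $i\in\{10,12,14\}$ --- together with the (implicit, computational) verification that each system is $\mathfrak B$-stable, is satisfied by the source, and excludes the target's orbit. Your proposal asserts that such systems can be found (``the search for each $\mathrm R$ reduces to producing a semi-invariant system'') but produces none of them, and there is no a priori guarantee that this particular lemma suffices to separate any given non-degenerating pair: the existence of a $\mathfrak B$-stable closed separating set is exactly what must be demonstrated, pair by pair, from the multiplication tables in \cite{p1,p2}. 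As written, the proposal is a correct restatement of the strategy of the previous theorem, not a proof of this one; to close the gap you would need to supply the explicit conditions $\mathrm R$ for each of the roughly two dozen pairs and carry out the three verifications for each.
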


\begin{proof}

The following conditions justify the listed non-degenerations.
    \begin{longtable}{lll|llll}

 \hline

$J$ &$ \not \rightarrow$&$J'$ &
$\begin{array}{l}
{\rm R}
\end{array}$\\

\hline

$3$ &$ \not \rightarrow$&$ {25},\  {47}\ $ &
$  
\begin{array}{l}
c_{23}^3=0, \ c_{22}^1=0, \  2c_{11}^2c_{34}^1=c_{34}^2 \big(2c_{11}^1-c_{12}^2\big)
\end{array}
$\\ 
\hline

$5$ &$ \not \rightarrow$& $ {47}$ &
$  
\begin{array}{l}
c_{22}^1=0, \  c_{11}^1c_{34}^2=c_{12}^2c_{34}^2+c_{11}^2c_{34}^1
\end{array}
$\\ 
\hline

$7,\ {45}$ & $ \not \rightarrow$ & ${41}, \ {47}$ &
$  
\begin{array}{l}
c_{22}^1=c_{12}^1=c_{34}^1=0
\end{array}
$\\ 
\hline

$8$ &$ \not \rightarrow$&${51}$ &
$  
\begin{array}{l}
c_{22}^1=c_{23}^4=c_{24}^3=0
\end{array}
$\\ 
\hline

${10},\ {12},\ {14}$ &$ \not \rightarrow$&${32}\ $ &
$  
\begin{array}{l}
c_{12}^1=0, \ c_{11}^1=c_{13}^3
\end{array}
$\\ 
\hline

$11$ &$ \not \rightarrow$&$  
33, \ 51 $ &
$  
\begin{array}{l}
c_{12}^1=c_{14}^3=c_{22}^1=c_{24}^3=0, \\ 
c_{12}^2=c_{14}^4, \ c_{12}^2c_{13}^4=c_{11}^2c_{23}^4
\end{array}
$\\ 
\hline

$13$ &$ \not \rightarrow$&$  
33,\ 51 $ &
$  
\begin{array}{l}
c_{12}^1=c_{14}^3=c_{22}^1=c_{23}^4=c_{24}^3=0, \ c_{11}^1=c_{13}^3
\end{array}
$\\ 
\hline
${20},\ {38}$ &$ \not \rightarrow$&${47}$ &
$  
\begin{array}{l}
c_{22}^1=c_{34}^1=0
\end{array}
$\\ 
\hline

$28$ &$ \not \rightarrow$&$  
24,\ 26,\  47 $ &
$  
\begin{array}{l}
c_{22}^1=c_{22}^2=c_{34}^1=0
\end{array}
$\\ 
\hline

$45$ &$ \not \rightarrow$&$  
32,\ 43 $ &
$  
\begin{array}{l}
c_{12}^1=c_{22}^1=c_{34}^1=0, \ c_{14}^4=2c_{11}^1-c_{13}^3
\end{array}
$\\ 
\hline

$D_\gamma$ &$ \not \rightarrow$&$ 
2,\ 4,\   17,\  18,\  30,  
 $ &
$  
\begin{array}{l}
c_{12}^1=0, \ c_{22}^2=2c_{23}^3,  
\end{array}
$\\ 

&& $31,\ 32,\ 35,\  36$ &
$\begin{array}{l}
  c_{12}^2=2c_{13}^3-c_{11}^1
\end{array}$
\\
\hline

\end{longtable}\noindent
$c_{ij}^{k}$ coefficients are structural constants in the basis $x_1=e_1,  \ x_2=e_2, \  x_3=f_1,\   x_4=f_2$.
\end{proof}

\subsubsection*{Acknowledgments} This work was supported by 
FCT 2023.08031.CEECIND and UID/00212/2025.

{\small
    
}

\EditInfo{November 13, 2025}{December 2, 2025}{Ivan Kaygorodov  and David Towers}

\end{document}